\documentclass[pdflatex,sn-mathphys-num]{sn-jnl}
\usepackage{amsmath,amssymb,amsfonts,amsthm}
\theoremstyle{thmstyleone}
\newtheorem{theorem}{Theorem}
\newtheorem{lemma}[theorem]{Lemma}
\newtheorem{proposition}[theorem]{Proposition}

\theoremstyle{thmstyletwo}
\newtheorem{remark}[theorem]{Remark}
\newtheorem{example}[theorem]{Example}
\newcommand{\CNJ}{C_{\mathrm{NJ}}}
\newcommand{\JB}{J_{\perp}}
\newcommand{\conv}{\operatorname{conv}}
\newcommand{\R}{\mathbb R}
\newcommand{\T}{\mathsf T}
\begin{document}
\title[Orthogonal James and von Neumann--Jordan constants]{An Improved Inequality between the von Neumann–Jordan and James Constants}
\author[1]{\fnm{Zhiyao} \sur{Fang}}
\email{fzy73@stu.aqnu.edu.cn}

\author*[1]{\fnm{Qi} \sur{Liu}}
\email{liuq67@aqnu.edu.cn}

\affil[1]{%
	\orgdiv{School of Mathematics and Statistics},
	\orgname{Anqing Normal University},
	\orgaddress{
		\city{Anqing},
		\postcode{246133},
		\state{Anhui},
		\country{China}
	}
}

\abstract{
	We provide a strengthened answer to the questions posed by Alonso, Mart\'in, and Papini [Studia Math. 188 (2008), 135--150] concerning the comparison between the von Neumann--Jordan and James constants . The classical inequality $\CNJ(X)\leq J(X)$ between the von Neumann--Jordan and James constants was established by Takahashi and Kato [J. Math. Anal. Appl. 359 (2009), 602--609], Wang [Proc. Amer. Math. Soc. 138 (2010), 695--701], and Yang and Li [Appl. Math. Lett. 23 (2010), 277--281], while related sharp estimates were obtained by Kato and Takahashi [Rend. Circ. Mat. Palermo (2) Suppl. 82 (2010), 75--91]. 
	We sharpen this inequality to $\CNJ(X)\leq\JB(X)$,
	where $\JB(X)$ is the Birkhoff--James orthogonal James
	constant introduced by Baronti and Papini
	[Constr. Math. Anal. 5 (2022), 37--45].
	 Our proof employs optimal distance ellipses and alternating contact directions.
}

\keywords{Banach space, Birkhoff--James orthogonality, James constant, von Neumann--Jordan constant}
\pacs[MSC Classification]{46B20}
\maketitle

\section{Introduction}\label{sec:introduction}

Let $X$ be a real Banach space with $\dim X\geq2$, and let $S_X$ denote its unit sphere. The von Neumann--Jordan constant is

\begin{equation}\label{eq:nj-definition}
	\CNJ(X)=\sup_{(x,y)\ne(0,0)}
	\frac{\|x+y\|^2+\|x-y\|^2}{2(\|x\|^2+\|y\|^2)}.
\end{equation}
Introduced by Clarkson \cite{clarkson1937},
this constant measures the largest deviation
from the parallelogram identity.
The classical James constant, introduced by
Gao and Lau \cite{gaolau1990}, is defined by
\[
J(X)=\sup_{x,y\in S_X}
\min\{\|x+y\|,\|x-y\|\}.
\]

measures how large both diagonals generated by two unit vectors can be.
Maligranda conjectured that
\[
\CNJ(X)\leq 1+\frac{J(X)^2}{4}
\]
(see \cite{alonso2008,saejung2006}).
Alonso, Mart\'in, and Papini \cite{alonso2008}
subsequently established a stronger estimate
and raised the questions of whether
$\CNJ(X)\leq J(X)$ holds for every Banach space
and whether equality occurs only when both constants
equal $2$.
The inequality $\CNJ(X)\leq J(X)$ was later established
by Takahashi and Kato \cite{takahashikato2009},
Wang \cite{wang2010}, and Yang and Li \cite{yangli2010},
with the equality case also characterized.
Further sharp estimates concerning these constants
were obtained by Kato and Takahashi
\cite{kato2010sharp}.

Recall that $x\in X$ is Birkhoff--James orthogonal to $y\in X$, written $x\perp_B y$, if
\begin{equation}\label{eq:bj-definition}
 \|x+t y\|\geq\|x\|\qquad(t\in\R).
\end{equation}
This relation originates in the work of Birkhoff and James \cite{birkhoff1935,james1947}. It need not be symmetric. Baronti and Papini \cite{baronti2022} introduced the orthogonal James constant
\begin{equation}\label{eq:orthogonal-james-definition}
 \JB(X)=\sup_{\substack{x,y\in S_X\\x\perp_B y}}
 \min\{\|x+y\|,\|x-y\|\}.
\end{equation}
Since $\JB(X)\leq J(X)$, replacing $J(X)$ by $\JB(X)$ in the von Neumann--Jordan comparison gives a stronger assertion. Orthogonal and unrestricted constants can differ, and orthogonality does not require the two diagonal lengths to agree. Consequently, the unrestricted comparison does not itself establish the orthogonal one.

To bridge this gap, we adopt a geometric approach based on
the two-dimensional structure of normed spaces. Rather than
attempting to impose Birkhoff--James orthogonality on vectors
appearing in the von Neumann--Jordan quotient, we exploit
the contact geometry of optimal distance ellipses to
construct suitable orthogonal unit vectors. This approach
establishes a quantitative comparison through the
Banach--Mazur distance and ultimately yields the desired
inequality for general real Banach spaces.

Throughout this paper, a \emph{real normed plane} means
a two-dimensional normed linear space over $\mathbb{R}$.
For real normed planes $E$ and $F$, their Banach--Mazur distance is
\[
d(E,F)=\inf\{\|T\|\,\|T^{-1}\|:T:E\longrightarrow F
\text{ is a linear isomorphism}\}.
\]
We write $\ell_2^2$ for the Euclidean plane. Our main result is the following.

\begin{theorem}\label{thm:main}
Every real normed plane $E$ satisfies
\begin{equation}\label{eq:planar-chain}
 \CNJ(E)\leq d(E,\ell_2^2)^2\leq\JB(E).
\end{equation}
Consequently, every real Banach space $X$ with $\dim X\geq2$ satisfies
\begin{equation}\label{eq:banach-comparison}
 \CNJ(X)\leq\JB(X).
\end{equation}
The coefficient one in both inequalities of \eqref{eq:planar-chain}, and in \eqref{eq:banach-comparison}, is sharp.
\end{theorem}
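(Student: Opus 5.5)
The argument splits into the two planar inequalities, followed by a reduction of the Banach space case to two-dimensional subspaces.

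\textbf{First inequality.} Set $d=d(E,\ell_2^2)$. Choose a Euclidean norm $|\cdot|$ on $E$ with $|x|\le\|x\|\le d\,|x|$; such a norm exists by compactness of the set of linear isomorphisms modulo scaling. The parallelogram law holds for $|\cdot|$, so
\[
\|x+y\|^2+\|x-y\|^2\le d^2\bigl(|x+y|^2+|x-y|^2\bigr)=2d^2\bigl(|x|^2+|y|^2\bigr)\le 2d^2\bigl(\|x\|^2+\|y\|^2\bigr).
\]
This gives $\CNJ(E)\le d^2$, which is the standard Jordan--von Neumann type estimate.

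\textbf{Second inequality.} This is the main obstacle. I would work with the optimal ellipse itself. Normalize so that the unit ball $B$ of $E$ satisfies $\mathcal E\subseteq B\subseteq d\,\mathcal E$, where $\mathcal E$ is the Euclidean unit disc. By optimality, John-type contact conditions hold. There are contact points $u_i\in\partial\mathcal E\cap\partial B$, the inner contacts, and $v_j\in\partial(d\mathcal E)\cap\partial B$, the outer contacts. They alternate around the circle: otherwise a linear map slightly stretching a suitable direction would give a strictly better ratio. In the plane I expect the alternation to give, up to rotation, an inner contact $u$ and an outer contact $v$ whose Euclidean angular separation is at most $\pi/4$, though this needs verification.

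The key facts to use are these:
\begin{itemize}
\item At an inner contact $u$, the ball $B$ lies in the supporting half-plane of $\mathcal E$ at $u$, so $u\perp_B u^{\perp}$ for the Euclidean perpendicular direction.
\item At an outer contact $v$, the circle $d\,\partial\mathcal E$ supports $B$ from outside.
\end{itemize}

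I would then try to take $x=u$ and $y$ the $B$-unit vector in the direction $u^\perp$, and estimate $\|x\pm y\|$ from below using $\|z\|\ge |z|$ together with the outer contact points near $x\pm y$. I would first test the candidate pair in which $x$ and $y$ are positioned symmetrically about outer contacts. Then $x+y$ and $x-y$ align with outer contact directions, each of norm roughly $d\cdot|x\pm y|/d$, with the ratio worked out from convexity so that $\min\|x\pm y\|\ge d^2$.

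If the inner-contact choice fails, I would use a continuity, intermediate value argument instead. Rotate $x$ along $S_E$ and take the Birkhoff--James orthogonal $y(x)$. The function $\|x+y\|-\|x-y\|$ changes sign, since replacing $y$ by $-y$ swaps the two terms. At the zero the common value is then compared with $d^2$ using the alternation of contacts. Sharpness of $d^2\le\JB(E)$ should be witnessed by a space where equality holds, for instance $\ell_\infty^2$, where $d^2=2=\JB$, or $\ell_2^2$.

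\textbf{Banach case and sharpness.} For general $X$, any pair $(x,y)$ in the quotient defining $\CNJ$ lies in some two-dimensional subspace $E$, so $\CNJ(X)=\sup_E\CNJ(E)$. Birkhoff--James orthogonality of $x,y\in E$ is the same whether computed in $E$ or in $X$, because it involves only the span. Hence $\JB(E)\le\JB(X)$, and the planar result gives \eqref{eq:banach-comparison}. Sharpness of all three inequalities follows from $E=\ell_2^2$, where every quantity equals $1$, and from $\ell_\infty^2$, where all equal $2$. These examples show that the coefficient one cannot be reduced.
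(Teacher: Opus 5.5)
Your first inequality and the reduction from $X$ to its two-dimensional subspaces are correct and agree with the paper. The gap is the second inequality $d(E,\ell_2^2)^2\le\JB(E)$, which is the real content of the theorem. You list possible strategies but prove nothing, and the concrete candidates you name cannot work.

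\textbf{Why your candidates fail.} With your normalization $\mathcal E\subseteq B\subseteq d\,\mathcal E$ one has $|z|/d\le\|z\|\le|z|$. Take $x=u$ an inner contact and $y=se$ with $e\perp u$, $|e|=1$, $\|y\|=1$. Then $s\le d$ and
\[
\min\{\|x+y\|,\|x-y\|\}\le|x\pm y|=\sqrt{1+s^2}\le\sqrt{1+d^2},
\]
which is strictly smaller than $d^2$ whenever $d^2>(1+\sqrt5)/2$. In $\ell_\infty^2$ your pair is $x=(1,0)$, $y=(0,1)$, and both diagonals have norm $1<2$. Your heuristic of letting $x\pm y$ point along outer contact directions is backwards: there $\|z\|=|z|/d$ is as small as it can be relative to $|z|$. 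The intermediate-value fallback yields a pair with $\|x+y\|=\|x-y\|$, but gives no control on the common value; the same pair $(1,0),(0,1)$ in $\ell_\infty^2$ is such a zero. Moreover, a Birkhoff--James orthogonal $y(x)$ need not depend continuously on $x$.

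\textbf{The missing idea.} As in the paper, put $x=Ru$ at an \emph{outer} contact. Then $\langle u,\cdot\rangle/R$ is a norming functional at $x$, so $x\perp_B y$ for $y=re$, the unit vector of $E$ in a direction $e\perp u$. Next, bound $\|x\pm y\|$ from below by the norm-one functionals $\langle v_\pm,\cdot\rangle$ of the two \emph{inner} contacts neighbouring $u$ in an alternating configuration $u_1,v_1,u_2,v_2$.

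Alternation by itself is not enough; you need its quantitative form. The separation $|\langle u,v\rangle|\le R^{-1}$ forces every angular gap to be at least $\alpha=\arccos(R^{-1})$. One can then choose $u$ with both neighbouring gaps in $[\alpha,\pi/2-\alpha]$, which gives $\|x\pm y\|\ge\tan\alpha+r\cos\alpha$. The crude bound $r\ge1$ only closes the argument for $\alpha\le\pi/6$. For $\pi/6\le\alpha\le\pi/4$ you need the sharper radial bound $r\ge1/\sin3\alpha$, which comes from the second outer contact, followed by a polynomial inequality in $t=\tan\alpha$.

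\textbf{Sharpness.} $\JB(\ell_2^2)=\sqrt2$, not $1$, so the Euclidean plane only shows sharpness of the first inequality. Sharpness of the second inequality and of $\CNJ(X)\le\JB(X)$ is witnessed by $\ell_\infty^2$, where all three quantities equal $2$.
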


The first inequality in \eqref{eq:planar-chain} follows directly from comparison with a Euclidean norm. The second is the essential step. We place the unit ball between an optimal pair of concentric, homothetic ellipses. The contact structure belongs to the theory of distance ellipsoids initiated by Ader and revisited by Grundbacher and Kobos \cite{grundbacher2026}; the planar alternation is also treated by Babev, Ivanov, and Nikolov \cite{babev2026}. We include the necessary contact argument to keep the proof self-contained. Its role here is to provide an orthogonal unit pair for which both diagonal lengths are at least the squared ellipse ratio.

Section~\ref{sec:ellipses} develops the geometry of optimal
distance ellipses and constructs the required
Birkhoff--James orthogonal pair.
Section~\ref{sec:comparison} establishes the main comparison
and discusses sharpness and equality cases.

\section{Geometric Constructions in Normed Planes}\label{sec:ellipses}

Let $K$ be the unit ball of a real normed plane, represented as an origin-symmetric convex body in $\R^2$. An optimal distance ellipse is an origin-centred ellipse $A$ for which
\[
 A\subset K\subset R A
\]
holds with the smallest possible $R$. This minimum is $d(E,\ell_2^2)$. Indeed, a linear isomorphism to the Euclidean plane gives such an inclusion after a scalar normalization, and the product of the two operator norms is precisely its outer-to-inner radius ratio.

For completeness, the minimum is attained. Fix a Euclidean disk $D$ and constants $0<m\leq M$ such that $mD\subset K\subset MD$. A positive definite symmetric matrix $Q$ defines the Euclidean norm $|z|_Q=(z^{\T}Qz)^{1/2}$. Write $a_Q,b_Q$ for its inner and outer radii on $K$, so that
\[
 \{z:|z|_Q\leq a_Q\}\subset K\subset\{z:|z|_Q\leq b_Q\}.
\]
Both radii are continuous in $Q$. The ratio is unchanged when $Q$ is multiplied by a positive scalar, so impose $\det Q=1$. Since
\[
 b_Q\geq m\sqrt{\lambda_{\max}(Q)},\qquad
 a_Q\leq M\sqrt{\lambda_{\min}(Q)},
\]
a bounded ratio confines $Q$ to a compact subset of the positive definite matrices with determinant one. A minimizing sequence therefore has a positive definite limit.

Choose linear coordinates and a scalar normalization so that the optimal ellipse is the Euclidean unit disk $D$. We then have
\begin{equation}\label{eq:optimal-annulus}
 D\subset K\subset R D,\qquad R=d(E,\ell_2^2).
\end{equation}
Let $\langle\cdot,\cdot\rangle$ and $|\cdot|$ denote the resulting Euclidean inner product and norm, and let $S^1=\partial D$. Define
\begin{equation}\label{eq:contact-sets}
 U=\{u\in S^1:Ru\in K\},\qquad
 V=\{v\in S^1:h_K(v)=1\},
\end{equation}
where $h_K(v)=\max_{z\in K}\langle v,z\rangle$ is the support function. The sets $U,V$ are nonempty, compact, and origin-symmetric. For $u\in U$ and $v\in V$, symmetry gives
\begin{equation}\label{eq:contact-separation}
 |\langle u,v\rangle|\leq R^{-1}.
\end{equation}
The points $v\in V$ are inner contact points as well as contact normals: $v\in D\subset K$ and $\langle v,v\rangle=h_K(v)=1$.

\begin{lemma}\label{lem:contact-hulls}
The convex hulls of the rank-one contact matrices intersect:
\begin{equation}\label{eq:contact-hulls}
 \conv\{uu^{\T}:u\in U\}\cap
 \conv\{vv^{\T}:v\in V\}\ne\varnothing.
\end{equation}
\end{lemma}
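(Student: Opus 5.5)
We argue by contradiction, using the optimality of $R$ in \eqref{eq:optimal-annulus}. Suppose the two convex hulls in \eqref{eq:contact-hulls} are disjoint. Both lie in the finite-dimensional space of symmetric $2\times2$ matrices, equipped with the trace inner product $\langle A,B\rangle=\operatorname{tr}(AB)$. Both are compact, since $U$ and $V$ are compact and the maps $u\mapsto uu^{\T}$ are continuous. Strict separation therefore gives a symmetric matrix $H$ and a constant $c$ with
\[
u^{\T}Hu<c<v^{\T}Hv\qquad(u\in U,\ v\in V).
\]
Every $uu^{\T}$ and $vv^{\T}$ has trace one, so replacing $H$ by $H-cI$ lets us take $c=0$. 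Then $u^{\T}Hu<0$ on $U$ and $v^{\T}Hv>0$ on $V$.

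Next we perturb the disk $D$ in the direction $H$. For small $\varepsilon>0$ let $Q_\varepsilon=I+\varepsilon H$, which is positive definite, and use the norm $|z|_{Q_\varepsilon}$. We claim the ratio $b_{Q_\varepsilon}/a_{Q_\varepsilon}$ is strictly smaller than $R$, which contradicts the minimality established just before the lemma.

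\emph{Outer radius.} We have $b_Q=\max_{z\in K}|z|_Q$. At $\varepsilon=0$ the maximum equals $R$ and is attained exactly at the points $Ru$, $u\in U$. There $|Ru|_{Q_\varepsilon}^2=R^2(1+\varepsilon u^{\T}Hu)<R^2$. By compactness of $U$ the inequality $u^{\T}Hu<0$ extends, with a uniform margin, to a neighbourhood of $U$ on the circle. Away from that neighbourhood, points of $K$ have Euclidean norm at most $R-\delta$ for some $\delta>0$. Hence $b_{Q_\varepsilon}\leq R(1-\eta\varepsilon)$ for some $\eta>0$ and all small $\varepsilon$.

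\emph{Inner radius.} We use support functions: $a_Q=\min_{|w|_{Q^{-1}}=1}h_K(w)$, the minimum of $h_K$ over dual-norm unit directions. At $\varepsilon=0$ this minimum equals $1$ and is attained exactly on $V$. To first order, $Q_\varepsilon^{-1}=I-\varepsilon H$. A direction $w$ with $|w|_{Q_\varepsilon^{-1}}=1$ near $v\in V$ is $w=v(1-\varepsilon v^{\T}Hv)^{-1/2}$, up to $o(\varepsilon)$. Since $h_K$ is positively homogeneous and Lipschitz, $h_K(w)\approx 1+\tfrac{\varepsilon}{2}v^{\T}Hv>1$. Away from a neighbourhood of $V$, $h_K$ on the unit circle is at least $1+\delta'$. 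So $a_{Q_\varepsilon}\geq 1+\eta'\varepsilon$.

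Combining the two estimates gives $b/a<R$ for small $\varepsilon$, the desired contradiction. The main obstacle is making this first-order argument uniform: the contact sets may be infinite, so we need compactness-plus-continuity arguments (a neighbourhood of $U$, respectively $V$, where the sign of the quadratic form persists, and a positive gap outside it) rather than pointwise derivatives. I would handle both radii by the same lemma: if a continuous function on the circle attains its extremum exactly on a compact set, and a perturbation of size $O(\varepsilon)$ improves it by at least $c\varepsilon$ near that set, then the extremum improves by order $\varepsilon$.
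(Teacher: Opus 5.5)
Your proposal is correct and takes essentially the same route as the paper: strict separation yields $H$, and the perturbation $Q_\varepsilon=I+\varepsilon H$ strictly lowers the outer-to-inner radius ratio because the active sets at $\varepsilon=0$ are exactly $RU$ and $V$, with the inner radius handled through support functions. The only difference is cosmetic: the paper computes the one-sided derivative of $\log(b^2/a^2)$ as $\max_{U}u^{\T}Hu-\min_{V}v^{\T}Hv$ by a Danskin-type rule, whereas you shift $H$ by $cI$ and bound the two radii separately (in your inner-radius step, what actually does the work is $h_K\geq1$ on $S^1$, not Lipschitz continuity).
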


\begin{proof}
If the two compact convex sets were disjoint, strict separation in the space of symmetric matrices would give a symmetric matrix $H$ with
\begin{equation}\label{eq:strict-matrix-separation}
 \max_{u\in U}u^{\T}Hu<\min_{v\in V}v^{\T}Hv.
\end{equation}
For small $\varepsilon>0$, put $Q_\varepsilon=I+\varepsilon H$. Its outer and inner radii satisfy
\begin{align}
 b(\varepsilon)^2&=\max_{z\in K}z^{\T}Q_\varepsilon z,
 \label{eq:perturbed-outer}\\
 a(\varepsilon)^2&=\min_{v\in S^1}
 \frac{h_K(v)^2}{v^{\T}Q_\varepsilon^{-1}v}.
 \label{eq:perturbed-inner}
\end{align}
The second identity follows by comparing support functions: the ellipse $z^{\T}Q_\varepsilon z\leq a^2$ has support function $a\sqrt{v^{\T}Q_\varepsilon^{-1}v}$.

At $\varepsilon=0$, the active points in \eqref{eq:perturbed-outer} are $RU$, while those in \eqref{eq:perturbed-inner} are $V$. Therefore
\begin{equation}\label{eq:ratio-derivative}
 \left.\frac{d}{d\varepsilon}\log\frac{b(\varepsilon)^2}{a(\varepsilon)^2}\right|_{0+}
 =\max_{u\in U}u^{\T}Hu-\min_{v\in V}v^{\T}Hv<0.
\end{equation}
Here the elementary differentiation rule for a compact maximum applies because the first-order expansions are uniform. Explicitly, if $g_\varepsilon=g_0+\varepsilon g_1+o(\varepsilon)$ uniformly on a compact set, evaluating at a maximizer of $g_0$ gives the lower derivative bound. Maximizers of $g_\varepsilon$ have accumulation points among the maximizers of $g_0$, which gives the upper bound. The right derivative is thus the maximum of $g_1$ on the active set; the minimum rule follows by changing signs.

Equation~\eqref{eq:ratio-derivative} contradicts optimality in \eqref{eq:optimal-annulus}. The perturbation need not preserve $\det Q=1$, since the radius ratio is invariant under scalar multiplication of $Q$. This proves \eqref{eq:contact-hulls}.
\end{proof}

The following lemma establishes the alternating
configuration of contact directions and provides
the angular estimates needed for the subsequent
geometric construction.

\begin{lemma}\label{lem:alternating-directions}
Suppose $R>1$ and put $\alpha=\arccos(R^{-1})$. There exist four directions in cyclic order on $\R\mathbb P^1$,
\[
 u_1,\ v_1,\ u_2,\ v_2,\qquad u_i\in U,\quad v_i\in V.
\]
Their four consecutive angular gaps $\delta_i$ satisfy
\begin{equation}\label{eq:angular-gaps}
 \delta_i\geq\alpha,\qquad \sum_{i=1}^4\delta_i=\pi.
\end{equation}
In particular, $0<\alpha\leq\pi/4$. One of the two outer directions, denoted by $u$, has its two neighbouring inner directions at angular gaps $\delta_-,\delta_+$ satisfying
\begin{equation}\label{eq:good-neighbours}
 \alpha\leq\delta_-,\delta_+\leq\pi/2-\alpha.
\end{equation}
The acute angle $\theta$ between $u$ and the other outer direction satisfies
\begin{equation}\label{eq:other-outer-angle}
 2\alpha\leq\theta\leq\pi/2.
\end{equation}
\end{lemma}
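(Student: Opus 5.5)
The plan is to encode each direction as a point on a circle, turn Lemma~\ref{lem:contact-hulls} into an interlacing statement, and then derive the angular bounds by counting gaps.

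\textbf{Encoding.} Parametrize $\R\mathbb P^1$ by an angle $\varphi\in\R/\pi\mathbb Z$, writing $w_\varphi=(\cos\varphi,\sin\varphi)$. The map $\varphi\mapsto w_\varphi w_\varphi^{\T}$ sends $\R\mathbb P^1$ bijectively onto a circle in the affine plane of trace-one symmetric $2\times2$ matrices. Explicitly,
\[
w_\varphi w_\varphi^{\T}=\tfrac12 I+\tfrac12\begin{pmatrix}\cos2\varphi&\sin2\varphi\\ \sin2\varphi&-\cos2\varphi\end{pmatrix},
\]
so $\varphi\mapsto 2\varphi$ identifies this circle with $\R/2\pi\mathbb Z$. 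Write $\hat U,\hat V$ for the images of $U,V$ in $\R\mathbb P^1$; they are closed because $U,V$ are compact.

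\textbf{Separation and interlacing.} By \eqref{eq:contact-separation}, for $u\in U$ and $v\in V$ the acute angle between the lines $\R u$ and $\R v$ is at least $\arccos(R^{-1})=\alpha$. Since $R>1$ we have $\alpha>0$, so $\hat U\cap\hat V=\varnothing$. I then claim $\hat U$ and $\hat V$ interlace, meaning there are $u_1,v_1,u_2,v_2$ in cyclic order with $u_i\in U$ and $v_i\in V$.
\begin{itemize}
\item Suppose not. Two disjoint closed subsets of a circle that do not interlace can be separated, so $\hat U$ lies in one closed arc and $\hat V$ in the complementary open arc.
\item On the matrix circle, a chord through the two separating points (perturbed slightly into the gaps between the sets) gives a line in the plane. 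This line strictly separates the two sets of points on the circle.
\item It therefore strictly separates their convex hulls, because a convex hull of points on one side of a line stays on that side. This contradicts Lemma~\ref{lem:contact-hulls}.
\end{itemize}
The step I expect to need most care is this one: making precise that non-interlacing closed disjoint subsets lie in complementary arcs, and that the resulting chord separation is strict despite possible accumulation. Compactness of $\hat U$ and $\hat V$ together with the positive gap $\alpha$ should handle both points.

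\textbf{Angular estimates.} Each consecutive gap $\delta_i$ is an angle between a $u$-direction and a $v$-direction, so $\delta_i\geq\alpha$. The four gaps go once around $\R\mathbb P^1$, so $\sum_i\delta_i=\pi$. Consequently $4\alpha\leq\pi$, and $\alpha>0$ comes from $R>1$.

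For \eqref{eq:good-neighbours}, note that at most one gap can exceed $\pi/2-\alpha$. If two did, the remaining two gaps, each at least $\alpha$, would push the total above $\pi$.
\begin{itemize}
\item Each gap is adjacent to exactly one outer direction: $\delta_1,\delta_4$ border $u_1$, and $\delta_2,\delta_3$ border $u_2$.
\item Choose $u$ to be the outer direction not adjacent to the exceptional gap, taking either one if no gap is exceptional.
\item Both neighbouring gaps of this $u$ then lie in $[\alpha,\pi/2-\alpha]$.
\end{itemize}

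For \eqref{eq:other-outer-angle}, the two arcs between $u_1$ and $u_2$ have lengths $\delta_1+\delta_2$ and $\delta_3+\delta_4$. These lengths sum to $\pi$ and each is at least $2\alpha$. The acute angle $\theta$ is the smaller of the two, so $2\alpha\leq\theta\leq\pi/2$.
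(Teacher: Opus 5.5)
Your proof is correct. Everything outside the interlacing step matches the paper: the encoding, $\delta_i\geq\alpha$, $\sum\delta_i=\pi$, the single exceptional gap, and $\theta$ as the shorter of $\delta_1+\delta_2$ and $\delta_3+\delta_4$.

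The interlacing step is where you take a different route. The paper argues directly. By Carath\'eodory's theorem in the plane, it picks finite subsets of the two contact images whose hulls still meet. It then notes that no circle point lies in the hull of other circle points, so neither inscribed polygon contains the other. It concludes that the polygon boundaries cross and reads the four alternating endpoints off a pair of crossing chords, with a separate remark for segment-shaped polygons.

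You argue by contraposition. Disjoint closed non-interlacing sets lie in complementary arcs, and a chord whose endpoints are pushed into the gaps strictly separates them. Convexity of open half-planes then keeps the hulls apart, contradicting Lemma~\ref{lem:contact-hulls}.

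Your version works with the compact sets directly and avoids both Carath\'eodory and the planar ``boundaries must cross'' step. The circle-topology fact you flagged is easy: if two points of $\hat V$ lay in different components of $\R\mathbb P^1\setminus\hat U$, the two endpoints of one component would be points of $\hat U$ alternating with them. The positive gap $\alpha$ also lets you place the chord endpoints explicitly, at distance $\alpha/2$ inside the ends of the arc containing $\hat V$, so strictness is automatic. The paper's version, in exchange, exhibits the alternating quadruple explicitly as the endpoints of two crossing chords from a finite configuration.
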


\begin{proof}
The map $u\mapsto uu^{\T}$ identifies the projective circle with a circle in the affine plane of trace-one symmetric matrices. If $u=(\cos\varphi,\sin\varphi)$, its circle parameter is $2\varphi$. By \eqref{eq:contact-separation}, the two contact images are disjoint. By Lemma~\ref{lem:contact-hulls}, their convex hulls intersect.

Choose finite subsets witnessing this intersection, using the planar convex hull theorem. No point of the circle can lie in the convex hull of other distinct circle points. Hence neither of the resulting inscribed convex polygons can contain the other. Their boundaries must cross, and a pair of crossing chords gives four alternating endpoints. This argument includes a polygon that is a line segment; coincident chord edges would have the same endpoints and are excluded by disjointness. Pulling these four endpoints back to $\R\mathbb P^1$ gives the required directions.

For a consecutive gap $\delta_i\in(0,\pi)$, \eqref{eq:contact-separation} yields $|\cos\delta_i|\leq\cos\alpha$, in particular $\delta_i\geq\alpha$. This proves \eqref{eq:angular-gaps} and $\alpha\leq\pi/4$. At most one gap can be strictly larger than $\pi/2-\alpha$: two such gaps together with the remaining two gaps would have total length greater than $\pi$. Each gap is adjacent to exactly one of the two outer directions, so one outer direction has neither neighbouring gap too large. This proves \eqref{eq:good-neighbours}. Each of the two projective arcs between the outer directions contains two consecutive gaps and has length at least $2\alpha$. Their shorter length is $\theta$, proving \eqref{eq:other-outer-angle}.
\end{proof}

We now use the preceding geometric configuration
to construct a suitable Birkhoff--James orthogonal
pair. The following proposition is the key step
in establishing the comparison between the
Banach--Mazur distance and the orthogonal
James constant.

\begin{proposition}\label{prop:orthogonal-pair}
Under \eqref{eq:optimal-annulus}, there exist $x,y\in S_E$ such that $x\perp_B y$ and
\begin{equation}\label{eq:diagonal-bound}
 \min\{\|x+y\|,\|x-y\|\}\geq R^2.
\end{equation}
\end{proposition}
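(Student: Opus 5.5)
The plan is to take $x$ at an outer contact point and $y$ along the supporting line of $K$ at $x$, so that Birkhoff--James orthogonality is automatic. Both diagonals will then be bounded below by testing against inner contact normals. If $R=1$ then $K=D$ is Euclidean, and $x=e_1$, $y=e_2$ give diagonals of length $\sqrt2\geq1=R^2$; so assume $R>1$ and use Lemma~\ref{lem:alternating-directions}.

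\textbf{Construction.} Let $u\in U$ be the good outer direction of Lemma~\ref{lem:alternating-directions}, and put $x=Ru$. Since $Ru\in K\subset RD$ and $|Ru|=R$, the point $x$ lies on $\partial K$, so $\|x\|=1$. The tangent line to $RD$ at $Ru$ supports $RD\supset K$, hence it supports $K$ at $x$. Let $w\perp u$ be a Euclidean unit vector. Then $\|x+tw\|\geq1$ for all $t$, i.e.\ $x\perp_B w$. Put $y=sw$ with $s=1/\|w\|$, so $y\in S_E$ and $x\perp_B y$. Because $D\subset K\subset RD$, we have $1\leq s\leq R$.

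\textbf{Lower bounds for the diagonals.} For $v\in V$ we have $h_K(v)=1$, so $\|z\|\geq\langle v,z\rangle$ for every $z$. Take the two inner directions $v_\pm$ adjacent to $u$, at angular gaps $\delta_\pm\in[\alpha,\pi/2-\alpha]$ on opposite sides of $u$. Choose signs so that $\langle v_\pm,u\rangle=\cos\delta_\pm$ and $\langle v_\pm,\pm w\rangle=\sin\delta_\pm$. This gives
\[
\|x\pm y\|\geq R\cos\delta_\pm+s\sin\delta_\pm .
\]
The right side is concave in $\delta$, so on $[\alpha,\pi/2-\alpha]$ it is minimised at an endpoint. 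Using $\cos\alpha=R^{-1}$, the two endpoint values are $1+s\sqrt{R^2-1}/R$ and $\sqrt{R^2-1}+s/R$. Reaching $R^2$ from these forces a lower bound on $s$ of order $R\sqrt{R^2-1}$ (at least in the first case), far above the trivial $s\geq1$.

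\textbf{Main obstacle.} The step I expect to be hardest is obtaining this lower bound on $s=|y|$, i.e.\ showing that $\partial K$ is far out in the direction $w\perp u$. The tool is the other outer direction $u'\in U$, at acute angle $\theta\in[2\alpha,\pi/2]$ from $u$ by \eqref{eq:other-outer-angle}. The relevant inclusion is
\[
K\supset\conv\bigl(D\cup\{\pm Ru'\}\bigr),
\]
whose boundary includes the tangent segments from $Ru'$ to $D$, touching $D$ at angle $\alpha$ from $u'$. I would first compute the radial function of this hull in direction $w$ as an explicit function of $\theta$ and $\alpha$, and check that it dominates the required bound throughout the admissible range. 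I would also combine it with the sharper support estimates coming from the inner contacts $v_\pm$ themselves. If this fails in some sub-range, I would instead replace $w$ by the actual supporting direction of $K$ at $x$ (any supporting line gives $x\perp_B y$). Alternatively, I would choose $x$ at the intermediate inner contact point between $u$ and $u'$, and redo the same support-function bookkeeping.
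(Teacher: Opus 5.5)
Up to the point you call the main obstacle, your argument is the paper's: $x=Ru$ for the good outer direction, and $y$ the norm-one multiple of the Euclidean perpendicular. You get $x\perp_B y$ from the functional $\langle u,\cdot\rangle/R$, the bounds $\|x\pm y\|\geq R\cos\delta_\pm+s\sin\delta_\pm$ from $v_\pm$, and use concavity in $\delta$. The tool you name for the obstacle, the tangent segment from the other outer contact $Ru'$ to $D$, is also the one the paper uses.

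\textbf{The gap.} The proposal stops exactly there. The lower bound on $s$ and the resulting inequality are never established; you only say you ``would compute and check'', with fallbacks in case it fails. So the proposition is not proved.

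\textbf{The target is misidentified.} Since $s\leq R$ and $\alpha\leq\pi/4$, the two endpoint values differ by $(R-s)(\cos\alpha-\sin\alpha)\geq0$. So only $\delta=\pi/2-\alpha$ matters, and the target is $\tan\alpha+s\cos\alpha\geq R^2$, i.e.\ $s\geq R\bigl(R^2-\sqrt{R^2-1}\bigr)$. Your first-endpoint condition $s\geq R\sqrt{R^2-1}$ is implied by this. Nor is the threshold `far above' $1$. With $t=\tan\alpha$ and $\alpha\leq\pi/6$, the trivial bound $s\geq1$ already suffices: $\cos\alpha=(1+t^2)^{-1/2}\geq1-t^2/2$ gives $t+1-t^2/2\geq1+t^2$ whenever $t\leq2/3$.

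This affects your plan directly. For $\alpha<\pi/6$ and $\theta=2\alpha$, the tangent point from $Ru'$ lies at angle $3\alpha<\pi/2$ from $u$. Hence $\conv(D\cup\{\pm Ru'\})$ has radial function exactly $1$ in direction $w$. A check that expects the hull to beat $1$ uniformly would appear to `fail' there, although nothing is actually wrong.

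\textbf{What is missing.} You need the case split at $\alpha=\pi/6$ and the large-angle computation.
\begin{itemize}
\item For $\pi/6\leq\alpha\leq\pi/4$, fix the signs of $u'$ and $w$ so that $u'=\cos\theta\,u+\sin\theta\,w$ with $2\alpha\leq\theta\leq\pi/2$. Flipping $w$ only swaps the two diagonal estimates.
\item The tangent point is $q=\cos\phi\,u+\sin\phi\,w$ with $\phi=\theta+\alpha\in[\pi/2,3\pi/4]$. It satisfies $\langle q,Ru'\rangle=R\cos\alpha=1$.
\item $q$ and $Ru'$ lie on opposite sides of the $w$-axis. So the segment $[Ru',q]\subset K$ meets that axis at height $1/\sin\phi\geq1/\sin3\alpha$.
\item Using $\sin3\alpha=t(3-t^2)/(1+t^2)^{3/2}$, the target becomes $t+\frac{1+t^2}{t(3-t^2)}\geq1+t^2$. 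This holds because the difference equals $\frac{(1-t)^2(t^3+t^2-t+1)}{t(3-t^2)}$.
\end{itemize}
Equality at $t=1$ (i.e.\ $R=\sqrt2$) shows there is no slack, so this estimate cannot be replaced by a cruder one.

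Your primary route does go through, and the fallbacks are unnecessary. But these steps are the substance of the proof, and your proposal does not contain them: the binding endpoint, the small-angle case with $s\geq1$, the radial bound $1/\sin3\alpha$, and the scalar inequality.
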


\begin{proof}
If $R=1$, the norm is Euclidean, and an orthonormal pair has both diagonal lengths $\sqrt2\geq R^2$. Suppose $R>1$, and choose $u$ as in Lemma~\ref{lem:alternating-directions}. Let $e$ be a Euclidean unit vector perpendicular to $u$ and set
\begin{equation}\label{eq:chosen-pair}
 x=Ru,\qquad y=re,\qquad r=\max\{s\geq0:se\in K\}.
\end{equation}
Then $x,y\in S_E$ and $1\leq r\leq R$. The linear functional $f(z)=\langle u,z\rangle/R$ has norm one because $K\subset RD$ and $f(x)=1$. Since $f(y)=0$, we have
\[
 \|x+t y\|\geq f(x+t y)=1\qquad(t\in\R).
\]
Thus $x\perp_B y$.

Represent the neighbouring inner contacts by
\[
 v_+=\cos\delta_+\,u+\sin\delta_+\,e,\qquad
 v_-=\cos\delta_-\,u-\sin\delta_-\,e.
\]
The functionals $z\mapsto\langle v_\pm,z\rangle$ have norm one. Therefore
\begin{align}
 \|x+y\|&\geq R\cos\delta_++r\sin\delta_+,
 \label{eq:plus-diagonal}\\
 \|x-y\|&\geq R\cos\delta_-+r\sin\delta_-.
 \label{eq:minus-diagonal}
\end{align}
The function $h(\delta)=R\cos\delta+r\sin\delta$ is concave on $[\alpha,\pi/2-\alpha]$. Moreover,
\[
 h(\alpha)-h(\pi/2-\alpha)
 =(R-r)(\cos\alpha-\sin\alpha)\geq0.
\]
Its minimum on this interval is consequently the right endpoint value, and
\begin{equation}\label{eq:common-lower-bound}
 h(\delta)\geq\tan\alpha+r\cos\alpha.
\end{equation}
We show that this lower bound is at least $R^2$.

Put $t=\tan\alpha\in(0,1]$, so that $R^2=1+t^2$. If $0<\alpha\leq\pi/6$, then $t\leq1/\sqrt3<2/3$. Using $r\geq1$ and the inequality $(1+s)^{-1/2}\geq1-s/2$ for $s\geq0$, we obtain
\begin{equation}\label{eq:small-angle-bound}
 \tan\alpha+r\cos\alpha
 \geq t+1-\frac{t^2}{2}\geq1+t^2=R^2.
\end{equation}

Suppose now that $\pi/6\leq\alpha\leq\pi/4$. We use the other outer contact to improve the radial bound on $r$. By origin symmetry and a possible change of sign of $e$, this contact is
\[
 p=R(\cos\theta\,u+\sin\theta\,e),
 \qquad 2\alpha\leq\theta\leq\pi/2.
\]
Changing the sign of $e$ only exchanges the two diagonal estimates; it does not assume any reflection symmetry of $K$. Put $\phi=\theta+\alpha$. The point
\[
 q=\cos\phi\,u+\sin\phi\,e
\]
belongs to $D$, and $\langle q,p\rangle=R\cos\alpha=1$. Hence the line through $p$ and $q$ is tangent to $D$ at $q$ and has equation
\[
 \cos\phi\,z_1+\sin\phi\,z_2=1
\]
in the basis $(u,e)$. Since
\[
 \pi/2\leq3\alpha\leq\phi\leq\pi/2+\alpha\leq3\pi/4,
\]
the first coordinate of $q$ is nonpositive and that of $p$ is nonnegative. The segment $[p,q]\subset K$ meets the positive $e$-axis at coordinate $1/\sin\phi$. It follows that
\begin{equation}\label{eq:radial-tangent-bound}
 r\geq\frac1{\sin(\theta+\alpha)}\geq\frac1{\sin3\alpha}.
\end{equation}
The last inequality uses the decrease of sine on $[\pi/2,\pi]$. If $\theta=\pi/2$, the intersection point is $p$ itself, so the same reasoning applies.

Using $\sin3\alpha=t(3-t^2)/(1+t^2)^{3/2}$ in \eqref{eq:radial-tangent-bound} gives
\begin{equation}\label{eq:large-angle-bound}
 \tan\alpha+r\cos\alpha
 \geq t+\frac{1+t^2}{t(3-t^2)}\geq1+t^2.
\end{equation}
Indeed,
\begin{equation}\label{eq:scalar-factorization}
 t+\frac{1+t^2}{t(3-t^2)}-(1+t^2)
 =\frac{(1-t)^2(t^3+t^2-t+1)}{t(3-t^2)}\geq0,
\end{equation}
because the denominator is positive and $t^3+t^2-t+1=t^3+t^2+(1-t)\geq0$ for $0<t\leq1$.

Combining \eqref{eq:plus-diagonal}, \eqref{eq:minus-diagonal}, and \eqref{eq:common-lower-bound} with \eqref{eq:small-angle-bound} or \eqref{eq:large-angle-bound} proves \eqref{eq:diagonal-bound}.
\end{proof}

\section{Main Results and Examples}\label{sec:comparison}

\begin{proof}[Proof of Theorem~\ref{thm:main}]
Let $E$ be a real normed plane. In the coordinates of \eqref{eq:optimal-annulus},
\begin{equation}\label{eq:norm-comparison}
 |z|/R\leq\|z\|\leq|z|\qquad(z\in E).
\end{equation}
The Euclidean parallelogram identity therefore gives
\[
 \frac{\|z+w\|^2+\|z-w\|^2}{2(\|z\|^2+\|w\|^2)}
 \leq R^2\frac{|z+w|^2+|z-w|^2}{2(|z|^2+|w|^2)}=R^2
\]
for $(z,w)\ne(0,0)$. Thus $\CNJ(E)\leq R^2$. Proposition~\ref{prop:orthogonal-pair} gives $R^2\leq\JB(E)$ and proves \eqref{eq:planar-chain}.

For an arbitrary real Banach space $X$ of dimension at least two, every pair of vectors lies in a two-dimensional subspace, enlarging a one-dimensional span if necessary. Hence
\begin{equation}\label{eq:nj-plane-reduction}
 \CNJ(X)=\sup_{\substack{E\subset X\\\dim E=2}}\CNJ(E).
\end{equation}
Birkhoff--James orthogonality of two vectors is unchanged on passing to a containing subspace: condition \eqref{eq:bj-definition} involves only vectors in their span. Nonzero orthogonal vectors are linearly independent, since $y=cx$ with $c\ne0$ would give $x+t y=0$ for $t=-1/c$. Consequently,
\begin{equation}\label{eq:j-plane-reduction}
 \JB(X)=\sup_{\substack{E\subset X\\\dim E=2}}\JB(E).
\end{equation}
Taking suprema in \eqref{eq:planar-chain} proves \eqref{eq:banach-comparison}.

\end{proof}

\begin{remark}
	\label{rem:euclidean-and-strict}
	The inequalities in Theorem~\ref{thm:main}
	are sharp. Indeed, for $E=\ell_\infty^2$,
	take $x=(1,1)$ and $y=(1,-1)$. Then
	$x,y\in S_E$, $x\perp_B y$, and
	\[
	\|x+y\|_\infty=\|x-y\|_\infty=2.
	\]
	Consequently, $\CNJ(E)=\JB(E)=2$, and
	\eqref{eq:planar-chain} yields
	$d(E,\ell_2^2)^2=2$.
	
	On the other hand, for the Euclidean plane,
	$\CNJ=1$, $d=1$, and $\JB=\sqrt2$.
	More generally, the estimates in
	Proposition~\ref{prop:orthogonal-pair} imply
	\[
	d(E,\ell_2^2)^2<\JB(E)
	\quad\text{if }d(E,\ell_2^2)<\sqrt2.
	\]
	Indeed, for $0<\alpha\leq\pi/6$, the
	difference in \eqref{eq:small-angle-bound}
	is $t-3t^2/2>0$, while for
	$\pi/6\leq\alpha<\pi/4$, the difference
	in \eqref{eq:scalar-factorization} is positive.
	Thus equality in the second inequality
	of \eqref{eq:planar-chain} can occur only
	at the maximal value $2$.
\end{remark}

\begin{example}\label{ex:strict-improvement}
	Consider $X=(\mathbb{R}^2,\|\cdot\|)$ with
	\[
	\|(a,b)\|=\max\left\{|a|,|b|,
	\left|a+\frac{b}{2}\right|\right\}.
	\]
	Baronti and Papini \cite{baronti2022} showed that
	\[
	J(X)=\frac74,\qquad \JB(X)=\frac53.
	\]
	Moreover, by convexity, the computation of $\CNJ(X)$
	reduces to pairs of vertices of the unit ball.
	Taking $u=(1/2,1)$ and $v=(-1,1)$ yields
	\[
	\begin{aligned}
		\CNJ(X)
		&=\max_{0\le t\le1}
		\frac{(1+t)^2+(1+t/2)^2}{2(1+t^2)}\\
		&=\frac{13+3\sqrt{17}}{16}.
	\end{aligned}
	\]
	Consequently,
	\[
		\CNJ(X)=\frac{13+3\sqrt{17}}{16}
		<\JB(X)=\frac53<J(X)=\frac74.
	\]
	Thus, Theorem~\ref{thm:main} provides a strict
	improvement over the classical inequality
	$\CNJ(X)\le J(X)$.
\end{example}

\begin{remark}\label{rem:orthogonality}
	The James constant admits the equivalent characterization
	\cite{papini2002}
	\[
	J(X)=\sup_{\substack{x,y\in S_X\\x\perp_I y}}
	\|x+y\|,
	\]
	where $x\perp_I y$ denotes isosceles orthogonality,
	defined by $\|x+y\|=\|x-y\|$.
	Thus the classical comparison between the
	von Neumann--Jordan and James constants
	can be interpreted in terms of isosceles
	orthogonality. Our result reveals that
	Birkhoff--James orthogonality, although
	generally distinct from isosceles orthogonality,
	also provides effective control of the
	von Neumann--Jordan constant, with potentially
	sharper estimates. This highlights the
	geometric significance of both orthogonality
	relations and their effectiveness in capturing
	the structure of Banach spaces.
\end{remark}

\section*{Acknowledgements}

Thanks to all the members of the Functional Analysis Research team of the College of Mathematics and Statistics of Anqing Normal University for their discussion and correction of the difficulties and errors encountered in this
paper. The authors cordially thank Professor Hai Zhang for his encouraging advice and the first author would like to acknowledge his strong support for undergraduate	students in carrying out basic mathematics research.

\section*{Declarations}

\subsection*{Competing Interests}
The authors declare that there are no competing interests.

\subsection*{Funding Information}
Not Applicable.

\subsection*{Author contribution}
These authors contributed equally to this work. All authors have read and agreed to the published version of the manuscript.

\subsection*{Data Availability Statement}
Not Applicable.


\end{document}